\documentclass[11pt]{article}

\usepackage{amsmath,amssymb,amsthm}
\usepackage{geometry}
\usepackage{hyperref}

\newcommand{\supp}{\mathrm{supp}}

\theoremstyle{plain}
\newtheorem{theorem}{Theorem}[section]
\theoremstyle{definition}
\newtheorem{definition}[theorem]{Definition}
\theoremstyle{remark}
\newtheorem{remark}[theorem]{Remark}
\theoremstyle{definition}
\newtheorem{example}[theorem]{Example}
\DeclareMathOperator*{\esssup}{ess\,sup}

\begin{document}

\begin{center}
{\bf Estimates for tail functions under Riesz transforms in Grand Lebesgue Spaces}\\[3mm]
{ Maria Rosaria Formica$^{1}$, Eugene Ostrovsky$^{2}$ and Leonid Sirota$^{3}$.}\\[3mm]
\end{center}

\begin{center}
{\footnotesize $^{1}$ Universit\`a degli Studi di Napoli ``Parthenope'', via Generale Parisi 13,\\ 80132, Napoli, Italy.\\
e-mail: \texttt{mara.formica@uniparthenope.it}\\[1mm]
$^{2,3}$ Bar-Ilan University, Department of Mathematics and Statistics,\\
52900, Ramat Gan, Israel.\\
e-mail: \texttt{eugostrovsky@gmail.com}\\
e-mail: \texttt{sirota3@bezeqint.net}
}
\end{center}

\vspace{3mm}

%\begin{abstract}
%We study tail estimates for functions under generalized Riesz-type transformations in the framework of Grand Lebesgue Spaces. We derive general bounds from $L^p$ estimates, present explicit model examples, and apply the abstract result to the classical Riesz transforms.
%\end{abstract}

\begin{abstract}
We study the tail behaviour of measurable functions under generalized Riesz-type operators in the framework of Grand Lebesgue Spaces. By exploiting the connection between the growth of $L^p$ norms and the Young--Fenchel transform, we derive explicit tail estimates from suitable $L^p$ bounds. We also present model examples and apply the abstract result to the classical Riesz transforms, showing how the $L^p$ growth of the operator interacts with the intrinsic tail behaviour of the input function.
\end{abstract}

\vspace{3mm}

\noindent {\small {\sc Key words and phrases.} Riesz transformation, Lebesgue--Riesz norms and spaces, tail of a function, support of function, Chernoff's, Chebyshev--Markov inequalities, Grand Lebesgue Spaces and norms.}

\medskip

\noindent {\small {\sc Mathematics Subject Classification (2020).} Primary 42B20; Secondary 47G10, 46E30, 26D15.}

% 42B20 Singular and fractional integrals (Calderón-Zygmund, etc.)
% 46E30: Spaces of measurable functions ($L^p$-spaces, Orlicz spaces, Köthe function spaces, Lorentz spaces, etc.)
% 26D15: Inequalities for sums and integrals
% 47G10: Integral operators

\section{\bf Introduction and preliminaries}\label{Intro}

\vspace{3mm}

Let $d\in\mathbb N$. For $p\geq 1$ the classical Lebesgue-Riesz space $L^p(\mathbb{R}^d)$ consists of all measurable functions $f:\mathbb{R}^d \to \mathbb{R}$ having finite norm $\|f\|_{L^p(\mathbb{R}^d)}= \|f\|_p$, where
\begin{eqnarray*}
\|f\|_p  & := &\displaystyle
\left(\int_{\mathbb{R}^d}|f(x)|^p\,dx\right)^{1/p}, \ \ p\geq 1\\[2mm]
\|f\|_\infty & := & \displaystyle \esssup_{x\in\mathbb{R}^d}|f(x)|, \ \ p=\infty.
\end{eqnarray*}
For $1 \le a < b \le \infty$, we denote by $L(a,b)$ the intersection $\bigcap_{p\in(a,b)} L^p(\mathbb{R}^d)$.

\vspace{2mm}

Recall that for a measurable function $f:\mathbb{R}^d \to \mathbb{R}$, its {\it tail function} $T[f](t)$ is
\[
T[f](t) \stackrel{def}{=} \mu\{ x\in \mathbb{R}^d:\ |f(x)| \ge t \},\qquad t \ge 0,
\]
where $\mu$ is the Lebesgue measure. For general background on measure theory and integration, see e.g. \cite{Bogachev}, \cite{Gikhman Skorokhod}. A key tool in our analysis is Stein's identity:
\begin{equation}\label{Stein}
\|f\|_p^p = p \int_0^{\infty} t^{p-1} T[f](t)\,dt,\qquad p>0,
\end{equation}
see e.g. \cite{Stein}, Chapters 1--2.

\noindent
The Euclidean norm of $x\in\mathbb{R}^d$ is
\[
\|x\|=\sqrt{(x,x)}=\sqrt{\sum_{j=1}^d x_j^2},\qquad x=(x_1,\ldots,x_d).
\]

\begin{definition}[\sc Operators with Riesz-type $L^p$ growth]\label{def Riesz}
Let $1\le a_1<b_1\le\infty$. An operator $U$ (not necessarily linear)
acting on measurable functions on $\mathbb{R}^d$ is said to be of
\emph{Riesz type} if there exist constants $\alpha,\beta,C\ge0$ such that
for all $p\in(a_1,b_1)$ and $f\in L^p(\mathbb{R}^d)$ one has
\begin{equation}\label{Riesz:type}
\|U[f]\|_p
\le
C (p-a_1)^{-\alpha}(b_1-p)^{-\beta}\|f\|_p .
\end{equation}
\end{definition}

\noindent
This definition does not specify the operator explicitly; rather,
it describes a class of operators whose $L^p$ norms exhibit a
controlled growth near the endpoints $p=a_1$ and $p=b_1$.
Many classical singular integral operators, including the
Riesz transforms, satisfy estimates of this type.

%\begin{definition}[Generalized Riesz-type operator]\label{def Riesz}
%An operator $U$ (not necessarily linear) is said to be of {\it generalized Riesz type} if there exist parameters $1\le a_1<b_1\le\infty$ and constants $\alpha, \beta, C \ge 0$ such that for all $p\in(a_1,b_1)$ and all $f\in L^p(\mathbb{R}^d)$,
%\[
%\|U[f]\|_p \le C (p-a_1)^{-\alpha}(b_1-p)^{-\beta}\,\|f\|_p.
%\]
%\end{definition}
%The class of operators introduced in Definition 1.1 includes, in particular,
%Riesz-type operators and other singular integral operators whose $L^p$
%norms satisfy estimates of the form above.

\vspace{3mm}

\noindent
{\bf  Our goal is to establish upper tail estimates for $U[f]$ in terms of the tail behavior of the original function $f$.}

\vspace{3mm}

\noindent
For the general framework of functional analysis and operator theory, we refer to the classic monograph by Kantorovich and Akilov \cite{Kantorovich Akilov}. For examples and background  see also \cite{Iwaniec}, \cite{Pisier}, \cite{Stein}, \cite{Unser} etc.\par

\vspace{2mm}

\begin{definition}[\sc Grand Lebesgue Spaces]
Let $1\le a<b\le\infty$ and let $\psi:(a,b)\to(0,\infty]$ be a measurable function
such that $\inf_{p\in(a,b)}\psi(p)>0$.
The Grand Lebesgue Space $G\psi(a,b)$ consists of all measurable functions
$f:\mathbb{R}^d\to\mathbb{R}$ for which the norm
\[
\|f\|_{G\psi(a,b)}
:=
\sup_{p\in(a,b)}\frac{\|f\|_p}{\psi(p)}
\]
is finite.

\noindent The function $\psi$ is called the \emph{generating function} of the space.
The set of all such functions $\psi$ is denoted by $\Psi_{(a,b)}$ and
\[
\Psi \stackrel{def}{=} \bigcup_{(a,b):\,1 \le a < b \le \infty} \Psi_{(a,b)}.
\]
\end{definition}
\noindent  Denote $\supp(\psi) := \{ p:\ \psi(p) < \infty \}$.
For brevity, when the interval $(a,b)$ is fixed, we simply write
$G\psi$ and $\|f\|_{G\psi}$.

\vspace{2mm}

\begin{remark}[Iwaniec-Sbordone spaces]
For a domain $\Omega\subset\mathbb{R}^d$ of finite Lebesgue measure and $p \in (1, \infty)$,  $\theta\geq 0$, the classical space $L^{p),\theta}(\Omega)$ introduced by T. Iwaniec and C. Sbordone corresponds to $G\psi$ with $\psi(q) = (p-q)^{-\theta/q}$ for $q \in (1,p)$, choosing $q=p-\varepsilon$, \ $\varepsilon\in(0,p-1)$. In fact
\[
\sup_{q\in(1,p)}\frac{\|f\|_{L^q(\Omega)}}{\psi_{p,\theta}(q)}
=
\sup_{q\in(1,p)}(p-q)^{\theta/q}\|f\|_{L^q(\Omega)}
=
\sup_{0<\varepsilon <p-1}\varepsilon^{\frac{\theta}{p-\varepsilon}}\|f\|_{L^{p-\varepsilon}(\Omega)},
\]
which is the usual norm of the Lebesgue space $L^{p),\theta}(\Omega)$.
%Our framework generalizes this by allowing arbitrary generating functions and domains.

In particular, this shows that these spaces correspond to a specific choice of generating function
in the framework of the Grand Lebesgue spaces $G\psi$  on domains $\Omega$ of finite measure.
\end{remark}

\vspace{2mm}

\noindent
{\sc Natural function.}
For $g \in L(a,b)$, its {\it natural function} is defined by $\psi[g](p):=\|g\|_p$ for $p\in(a,b)$. By construction, $\|g\|_{G\psi[g]}=1$. Thus the function $\psi[g]$ describes the exact $L^p$-integrability
profile of $g$.
% This function captures the exact integrability profile of $g$.

\vspace{2mm}

Examples of generating functions $\psi \in \Psi_{(a,b)}$ frequently encountered in applications include:
\begin{align}
\psi_{a,b;\alpha,\beta}(p) &:= (p-a)^{-\alpha}(b-p)^{-\beta}, \quad p \in (a,b), \label{ex psi} \\
\psi_m(p) &:= p^{1/m}, \quad p \ge 1, \label{ex psi m}
\end{align}
where $1 \le a < b < \infty$, $\alpha, \beta \ge 0$, and $m > 0$.

The case $\psi_m(p)$ with $m=2$ corresponds to the well-known class of subgaussian
random variables, provided that the underlying measure $\mu$ is
probabilistic.

\vspace{3mm}

The theory of these spaces has been extensively developed and has many applications; see, for example,
\cite{FKOS}, \cite{Formica 14}, \cite{Kozachenko 1}, \cite{Kozachenko 11},
\cite{Kozachenko 12}, \cite{Liflyand}, \cite{Ostrovsky mono}, \cite{Ostr HAIT},
\cite{Yudovich1}, \cite{Yudovich2}.

\vspace{3mm}

\noindent
We now recall the classical connection between the tail function $T[f]$ and the Grand Lebesgue norm.

\noindent
By \eqref{Stein},
%\noindent\textit{Norm from tail.}
\begin{equation}\label{norm-from-tail}
\|f\|_{G\psi}
=\sup_{p\in\supp(\psi)}\dfrac{\left[p \, \displaystyle\int_0^{\infty} t^{p-1} T[f](t)\,dt\right]^{1/p}}{\psi(p)}.
\end{equation}

\vspace{3mm}

% =================================================================

Conversely, let $\|f\|_{G\psi}\in(0,\infty)$. By the Chebyshev-Markov inequality, for any $p \in \mathrm{supp}(\psi)$, we have
\begin{equation}\label{tail-from-norm-new}
T[f](t) \le \frac{\|f\|_p^p}{t^p} \le \frac{\|f\|_{G\psi}^p \cdot \psi^p(p)}{t^p}.
\end{equation}
By taking the logarithm, we can write
\[
\ln T[f](t) \le p \ln \psi(p) - p \ln t + p \ln \|f\|_{G\psi}.
\]
Define
\[
\nu_\psi(p):=p\ln\psi(p),\qquad p\in\supp(\psi),
\]
and let $\nu_\psi^*$ be its Young--Fenchel (Legendre) transform,
\[
\nu_\psi^*(y):=\sup_{p\in\supp(\psi)}\bigl(py-\nu_\psi(p)\bigr),
\qquad y\in\mathbb{R}.
\]
To obtain the sharpest estimate, we optimize over $p$. %Optimizing over $p$ yields
\begin{equation}\label{tail-from-norm}
T[f](t)\le \exp\left\{-\nu_\psi^*\!\left(\ln\frac{t}{\|f\|_{G\psi}}\right)\right\},
\qquad f\neq0,\ t>0.
\end{equation}

For brevity, define
\[
R[\psi](u;t):=\exp\left\{-\nu_\psi^*\!\left(\ln\frac{t}{u}\right)\right\},
\qquad u>0,\ t>0.
\]
Then
\[
T[f](t)\le R[\psi](\|f\|_{G\psi};t).
\]
The function $R[\psi](u;t)$ thus provides the corresponding tail estimate associated with the generating function~$\psi$.

%===========================================================================

\begin{remark}
The correspondence between the relations tail $\to$ norm \eqref{norm-from-tail} and norm $\to$ tail \eqref{tail-from-norm} is exact only in the exponential sense,
that is, up to multiplicative constants in the exponent; see, for instance,
\cite{FOS2024}, \cite{Liflyand}.
\end{remark}

\subsection*{Structure of the paper}

The paper is organized as follows.
In Section 2 we present explicit examples and compute the corresponding
$L^p$ norms and tail asymptotics.
In Section 3 we analyze how $L^p$ estimates for generalized Riesz-type
operators yield boundedness results in Grand Lebesgue Spaces.
Section 4 contains the main tail estimate for operators satisfying
suitable $L^p$ bounds.
Finally, in Section 5 we apply the abstract result to the classical
Riesz transforms.

\vspace{2mm}

%\section*{\sc Examples. Correct $L^p$ norms and tail regimes}

\section{\bf Examples: \texorpdfstring{$L^p$}{Lp} norms and tail asymptotics}

%In this section we present examples of functions for which the $L^p$ norms and
%the corresponding tail functions can be computed explicitly. Moreover, we determine the corresponding asymptotic form of the tail functions.  In the following examples, we compute the $L^p$ norms for several classes of radial functions; for the treatment of integrals involving functions with jump discontinuities, we refer to \cite{Grigorjeva}.

In this section we compute the $L^p$ norms and determine the asymptotic tail behavior for several classes of radial functions. These examples, involving functions with jump discontinuities (see e.g. \cite{Grigorjeva} for the general treatment of such integrals), allow for explicit calculations that illustrate the general theory.
We recall that $A(t)\sim B(t)$ as $t\to t_0$ means $\lim_{t\to t_0} A(t)/B(t)=1$.
%while $A(t)\asymp B(t)$ denotes comparability up to multiplicative constants.

\noindent
%We use polar coordinates.
We use radial coordinates in $\mathbb{R}^d$.
Let
\[
S_{d-1}=\frac{2\pi^{d/2}}{\Gamma(d/2)}
\]
denote the surface area of the unit sphere in $\mathbb{R}^d$, where $\Gamma(\cdot)$ is the Euler Gamma function
\[
\Gamma(z):=\int_0^\infty t^{z-1}e^{-t}\,dt,\qquad z>0.
\]
Then, for every radial integrable function $F(\|x\|)$,
\begin{equation}\label{radial function}
\int_{\mathbb{R}^d} F(\|x\|)\,dx
= S_{d-1}\int_0^\infty F(r)\,r^{d-1}\,dr.
\end{equation}
%We introduce the constants
%\begin{equation*}
%\kappa(d):=\int_0^\infty y^{d-1}e^{-y^2/2}\,dy
%=2^{\frac d2-1}\Gamma\!\left(\frac d2\right),
%\end{equation*}
%which follows from the substitution $u=y^2/2$, and
%\begin{equation*}
%J(d):=\frac{(2\pi)^{d/2}}{\kappa(d)}=\frac{2\pi^{d/2}}{\Gamma(d/2)}=S_{d-1}.
%\end{equation*}

\vspace{1mm}
We denote by $\mathbf{1}_A$ the indicator function of a set $A$, i.e.
\[
\mathbf{1}_A(x)=
\begin{cases}
1, & x\in A,\\
0, & x\notin A .
\end{cases}
\]

\begin{example}\label{ex:f}
Let
\[
f_{a,\gamma}(x)
:=
\mathbf{1}_{\{\|x\|>1\}}\,
\|x\|^{-1/a}(\ln \|x\|)^{\gamma},
\qquad a>0,\ \gamma\ge0.
\]
Let $r=\|x\|$. Then, using \eqref{radial function} and the substitution $u=\ln r$ (so that $r=e^u$, $dr=e^u\,du$), we obtain
\[
\|f_{a,\gamma}\|_p^p
=
S_{d-1}\int_1^\infty r^{d-1-p/a}(\ln r)^{\gamma p}\,dr
=
S_{d-1} \int_0^\infty e^{u(d-p/a)}u^{\gamma p}\,du .
\]
Applying  the Gamma identity
\[
\int_0^\infty e^{-\lambda u}u^\alpha\,du
=
\Gamma(\alpha+1)\lambda^{-\alpha-1}, \ \ \ \ \lambda>0, \ \ \alpha>-1
\]
with $\lambda=p/a-d$ and $\alpha=\gamma p$, for $p>ad$, we obtain
\[
\|f_{a,\gamma}\|_p^p =
S_{d-1}\,\Gamma(\gamma p+1)\,(p/a-d)^{-\gamma p-1},
\qquad p>ad.
\]

\vspace{3mm}

\noindent
{\bf Tail behaviour of $f_{a,\gamma}$.}
Consider the radial function
\[
\phi(r):=r^{-1/a}(\ln r)^\gamma,\qquad r> 1, \ \ \gamma>0.
\]
Since $\phi(r)\to 0$ as $r\to 1^+$ and $r\to\infty$, the continuous function $\phi$
is bounded on $(1,\infty)$ and therefore admits a finite supremum.

Since $|f_{a,\gamma}(x)|\le \|f_{a,\gamma}\|_\infty$ for all $x\in\mathbb{R}^d$,
the set $\{x:|f_{a,\gamma}(x)|\ge t\}$ is empty for $t>\|f_{a,\gamma}\|_\infty$,
and therefore
\[
T[f_{a,\gamma}](t)=0,\qquad t>\|f_{a,\gamma}\|_\infty .
\]
Observe that
\[
\frac{\phi'(r)}{\phi(r)}=\frac{d}{dr}\ln\phi(r).
\]
Since $\phi(r)>0$ for all $r>1$ and $\gamma>0$, we have
\[
\phi'(r)=0
\quad\Longleftrightarrow\quad
\frac{d}{dr}\ln\phi(r)=0.
\]
Therefore the critical points of $\phi$ coincide with those of $\ln\phi$.
If $\gamma>0$, then
\[
\frac{d}{dr}\ln \phi(r)
=
-\frac1{ar}+\frac{\gamma}{r\ln r},
\]
so that the unique critical point is given by
\[
\ln r=a\gamma,
\qquad\text{i.e.}\qquad
r=e^{a\gamma}.
\]
Therefore
\[
\|f_{a,\gamma}\|_\infty
=
\max_{r>1} r^{-1/a}(\ln r)^\gamma
=
e^{-\gamma}(a\gamma)^\gamma ,
\qquad \gamma>0.
\]
For $\gamma=0$ one has
\[
f_{a,0}(x)=\mathbf{1}_{\{\|x\|>1\}}\|x\|^{-1/a},
\]
that is,
\[
f_{a,0}(x)=0 \ \text{for} \ \|x\|\le1, \qquad
f_{a,0}(x)=\|x\|^{-1/a} \ \text{for} \ \|x\|>1.
\]
Since $r^{-1/a}$ is decreasing for $r>1$, its supremum on $(1,\infty)$
is attained as $r\to1^+$, and therefore
\[
\|f_{a,0}\|_\infty=1.
\]
The nontrivial tail behaviour corresponds to $t\downarrow0$.

For sufficiently small $t>0$, the equation
\[
\phi(r)=t  \ \quad \hbox{i.e.} \ \quad r^{-1/a}(\ln r)^\gamma=t
\]
has a unique large solution $r=r(t)>1$.

Since $\phi$ is decreasing for all sufficiently large $r$, we have
\[
\phi(r)\ge t
\quad\Longleftrightarrow\quad
1<r\le r(t).
\]
Since $f_{a,\gamma}(x)\ge t$ iff $1<\|x\|\le r(t)$, the level set
$\{x:f_{a,\gamma}(x)\ge t\}$ is the spherical shell
$1<\|x\|\le r(t)$. Then
\[
T[f_{a,\gamma}](t)=\mu\left(\{x\in\mathbb{R}^d:f_{a,\gamma}(x)\ge t\}\right)
=
\mu\left(\{x\in\mathbb{R}^d:1<\|x\|\le r(t)\}\right),
\]
and hence
\[
T[f_{a,\gamma}](t)
=
\mu\bigl(\|x\|\le r(t)\bigr)-\mu\bigl(\|x\|\le1\bigr)
=
\frac{S_{d-1}}{d}\bigl(r(t)^d-1\bigr).
\]
Since $r(t)\to\infty$ as $t\downarrow0$, this yields
\[
T[f_{a,\gamma}](t)\sim \frac{S_{d-1}}{d}\,r(t)^d,
\qquad t\downarrow0.
\]
Moreover, asymptotic inversion of
\[
t=r^{-1/a}(\ln r)^\gamma
\]
gives
\[
r(t)\sim t^{-a}\,[\ln(1/t)]^{a\gamma},
\qquad t\downarrow0.
\]
Consequently,
\begin{equation}\label{tail-f}
T[f_{a,\gamma}](t)
\sim
\frac{S_{d-1}}{d}\,
t^{-ad}\,[\ln(1/t)]^{a\gamma d},
\qquad t\downarrow0.
\end{equation}
Here $\sim$ means that the ratio of the two sides tends to $1$ as $t\downarrow0$.
\end{example}

\vspace{3mm}

\begin{example}\label{ex:g}
Let
\[
g_{b,\nu}(x)
:=
\mathbf{1}_{\{\|x\|<1\}}\,
\|x\|^{-1/b}\,|\ln \|x\||^{\nu},
\qquad b>0,\ \nu>0 .
\]
Setting $r=\|x\|$ and using \eqref{radial function}, we obtain
\[
\|g_{b,\nu}\|_p^p
=
S_{d-1}\int_0^1 r^{d-1-p/b}|\ln r|^{\nu p}\,dr .
\]

With the substitution $u=-\ln r$ (so that $r=e^{-u}$ and $dr=-e^{-u}du$), the integral becomes
\[
\|g_{b,\nu}\|_p^p
=
S_{d-1}\int_0^\infty e^{-u(d-p/b)}u^{\nu p}\,du .
\]

Applying the Gamma identity
\[
\int_0^\infty e^{-\lambda u}u^\alpha\,du
=
\Gamma(\alpha+1)\lambda^{-\alpha-1},
\qquad \lambda>0,\ \alpha>-1,
\]
with $\lambda=d-p/b$ and $\alpha=\nu p$, we obtain
\[
\|g_{b,\nu}\|_p^p
=
S_{d-1}\,\Gamma(\nu p+1)\,(d-p/b)^{-\nu p-1},
\qquad 1\le p<bd .
\]

\medskip
\noindent
{\bf Tail behaviour of $g_{b,\nu}$.}
Since $g_{b,\nu}(x)\to\infty$ as $\|x\|\downarrow0$, large values of
$g_{b,\nu}$ occur near the origin. Therefore the nontrivial tail
behaviour corresponds to the regime $t\to\infty$.

For sufficiently large $t>0$, let $r(t)\in(0,1)$ be the solution of
\[
r^{-1/b}|\ln r|^\nu=t .
\]

Since $g_{b,\nu}(x)\ge t$ iff $\|x\|\le r(t)$, the level set
$\{x:g_{b,\nu}(x)\ge t\}$ is the ball $\|x\|\le r(t)$. Hence
\[
T[g_{b,\nu}](t)
=
\mu(\|x\|\le r(t))
=
\frac{S_{d-1}}{d}\,r(t)^d .
\]

Moreover, asymptotic inversion of
\[
t=r^{-1/b}|\ln r|^\nu
\]
gives
\[
r(t)\sim t^{-b}(\ln t)^{b\nu},
\qquad t\to\infty .
\]

Consequently,
\begin{equation}\label{tail-g}
T[g_{b,\nu}](t)
\sim
\frac{S_{d-1}}{d}\,
t^{-bd}(\ln t)^{b\nu d},
\qquad t\to\infty .
\end{equation}

For $t\downarrow0$, one simply has
\[
T[g_{b,\nu}](t)\to \mu(\{\|x\|<1\})=\frac{S_{d-1}}{d}.
\]
\end{example}

\vspace{3mm}

\begin{remark}
The previous two examples are complementary. The function $f_{a,\gamma}$
describes the behaviour of tails corresponding to large radii
($r\to\infty$), which leads to the regime $t\downarrow0$.
Conversely, the function $g_{b,\nu}$ describes the behaviour near the
origin ($r\to0$), producing the opposite regime $t\to\infty$. Summarizing:
\end{remark}

%\begin{center}
%\begin{tabular}{|c|c|c|}
%\hline
%Property & Example $f_{a,\gamma}$ & Example $g_{b,\nu}$ \\
%\hline
%Definition &
%$\displaystyle \mathbf{1}_{\{\|x\|>1\}}\,
%\|x\|^{-1/a}(\ln \|x\|)^\gamma$
%&
%$\displaystyle \mathbf{1}_{\{\|x\|<1\}}\,
%\|x\|^{-1/b}|\ln \|x\||^\nu$
%\\
%\hline
%Radial domain &
%$r>1$ &
%$0<r<1$
%\\
%\hline
%Limit of the function &
%$f_{a,\gamma}(x)\to0$ as $r\to\infty$ &
%$g_{b,\nu}(x)\to\infty$ as $r\to0$
%\\
%\hline
%Interesting tail regime &
%$t\downarrow0$ &
%$t\to\infty$
%\\
%\hline
%Radial equation &
%$\displaystyle t=r^{-1/a}(\ln r)^\gamma$
%&
%$\displaystyle t=r^{-1/b}|\ln r|^\nu$
%\\
%\hline
%Asymptotic inversion &
%$\displaystyle r(t)\sim t^{-a}[\ln(1/t)]^{a\gamma}$
%&
%$\displaystyle r(t)\sim t^{-b}(\ln t)^{b\nu}$
%\\
%\hline
%Tail behaviour &
%$\displaystyle T[f](t)\sim
%\frac{S_{d-1}}{d}\,
%t^{-ad}[\ln(1/t)]^{a\gamma d}$
%&
%$\displaystyle T[g](t)\sim
%\frac{S_{d-1}}{d}\,
%t^{-bd}(\ln t)^{b\nu d}$
%\\
%\hline
%\end{tabular}
%\end{center}

\begin{center}
\begin{tabular}{|c|c|c|}
\hline
Function & Behaviour & Tail regime \\
\hline
$f_{a,\gamma}=\displaystyle \mathbf{1}_{\{\|x\|>1\}}\,
\|x\|^{-1/a}(\ln \|x\|)^\gamma$
&
$\displaystyle f_{a,\gamma}(x)\to 0 \quad (\|x\|\to\infty)$
&
$\displaystyle t\downarrow 0$
\\
\hline
$g_{b,\nu}=\displaystyle \mathbf{1}_{\{\|x\|<1\}}\,
\|x\|^{-1/b}|\ln \|x\||^\nu$
&
$\displaystyle g_{b,\nu}(x)\to\infty \quad (\|x\|\to 0)$
&
$\displaystyle t\to\infty$
\\
\hline
\end{tabular}
\end{center}

\vspace{3mm}

\begin{example}\label{ex:sum}
Let
\[
h_{a,b;\gamma,\nu}(x)
:=
f_{a,\gamma}(x)+g_{b,\nu}(x),
\]
where $f_{a,\gamma}$ and $g_{b,\nu}$ are as in the previous examples.

Since $f_{a,\gamma}$ is supported on $\{\|x\|>1\}$ and
$g_{b,\nu}$ is supported on $\{\|x\|<1\}$, their supports are disjoint
(up to the null set $\{\|x\|=1\}$). We have
$f_{a,\gamma}(x)g_{b,\nu}(x)=0$ almost everywhere. Hence
\[
|f_{a,\gamma}(x)+g_{b,\nu}(x)|^p
=
|f_{a,\gamma}(x)|^p+|g_{b,\nu}(x)|^p
\quad \text{a.e.}
\]
and therefore, for any $p\in(ad,bd)$,
\[
\|h_{a,b;\gamma,\nu}\|_p^p
=
\|f_{a,\gamma}\|_p^p
+
\|g_{b,\nu}\|_p^p .
\]

\medskip
\noindent
{\bf Tail behaviour of $h_{a,b;\gamma,\nu}$.}
For small $t$, the outer component $f_{a,\gamma}$ dominates, whereas
for large $t$ the singular behaviour of $g_{b,\nu}$ near the origin
dominates. Consequently,
\begin{equation}\label{tail-h-small}
T[h_{a,b;\gamma,\nu}](t)
\sim
T[f_{a,\gamma}](t)
\sim
\frac{S_{d-1}}{d}\,
t^{-ad}[\ln(1/t)]^{a\gamma d},
\qquad t\downarrow0,
\end{equation}
and
\begin{equation}\label{tail-h-large}
T[h_{a,b;\gamma,\nu}](t)
\sim
T[g_{b,\nu}](t)
\sim
\frac{S_{d-1}}{d}\,
t^{-bd}(\ln t)^{b\nu d},
\qquad t\to\infty .
\end{equation}
\end{example}
%The function $h_{a,b;\gamma,\nu}](t)$ combines both behaviours of $f_{a,\gamma}$ and $g_{b,\nu}$.

\begin{remark}
The above examples illustrate two complementary types of tail behaviour.
The function $f_{a,\gamma}$ produces a tail regime corresponding to
$t\downarrow0$, while $g_{b,\nu}$ produces the opposite regime $t\to\infty$.
The function $h_{a,b;\gamma,\nu}=f_{a,\gamma}+g_{b,\nu}$ combines both
effects.

In particular, the corresponding natural functions
\[
\psi[f_{a,\gamma}](p)=\|f_{a,\gamma}\|_p,
\qquad
\psi[g_{b,\nu}](p)=\|g_{b,\nu}\|_p
\]
describe explicitly the growth of the $L^p$ norms and therefore generate
the associated Grand Lebesgue Spaces.
\end{remark}

\vspace{5mm}

%\section{Auxiliary facts: GLS bounds derived from \texorpdfstring{$L^p$}{Lp} estimates}

\section{Boundedness of Generalized Riesz-type Operators in \texorpdfstring{$G\psi$}{G psi} spaces}

Let $U$ be a generalized Riesz-type operator as introduced in Definition \ref{def Riesz}. In this section, we show how its $L^p$ properties translate into boundedness within the framework of Grand Lebesgue Spaces. This step is essential because, although $U$ is defined via point-wise $L^p$ estimates, its action on the tail behavior is best captured by the norm of a Grand Lebesgue space $G\varphi$, whose generating function $\varphi$ will be constructed starting from the operator's profile.

\medskip
\noindent
Let $1\le a_1<b_1\le\infty$ and $1\le a_2<b_2\le\infty$, and let
\[
h:\mathbb{R}^d\to\mathbb{R}
\]
be a measurable function. Assume that
\[
\zeta\in\Psi_{(a_1,b_1)}, \qquad \psi\in\Psi_{(a_2,b_2)}.
\]
Suppose that an operator (not necessarily linear) $U$ satisfies the estimate
\begin{equation}\label{Lp-bound}
\|U[h]\|_p \le \zeta(p)\,\|h\|_p,
\qquad p\in(a_1,b_1),
\end{equation}
and that
\[
h\in G\psi(a_2,b_2).
\]
Let
\[
(a_3,b_3):=(a_1,b_1)\cap(a_2,b_2),
\]
and assume that this interval is nonempty. Define
\[
\varphi(p):=\zeta(p)\psi(p),\qquad p\in(a_3,b_3).
\]
Since $h\in G\psi(a_2,b_2)$, we have
\[
\|h\|_p\le \psi(p)\|h\|_{G\psi(a_2,b_2)},
\qquad p\in(a_2,b_2).
\]
Combining this with \eqref{Lp-bound}, we obtain for $p\in(a_3,b_3)$
\[
\|U[h]\|_p
\le
\zeta(p)\|h\|_p
\le
\zeta(p)\psi(p)\|h\|_{G\psi(a_2,b_2)}
=
\varphi(p)\|h\|_{G\psi(a_2,b_2)}.
\]
Taking the supremum over $p\in(a_3,b_3)$ yields
\begin{equation}\label{General}
\|U[h]\|_{G\varphi(a_3,b_3)}
\le
\|h\|_{G\psi(a_2,b_2)}.
\end{equation}
This general embedding between Grand Lebesgue spaces will be specialized in the next section to derive tail estimates by choosing the generating function $\psi$ as the natural function of the source $f$.

The inequality \eqref{General} provides a fundamental estimate for the operator $U$. We emphasize that this result represents an extension of the standard definition of the generalized Riesz operator. While the classical operator theory is typically developed for fixed $L^p$ spaces, the bound in \eqref{General} establishes a broader continuity across a family of Grand Lebesgue Spaces with varying parameters. This mapping property effectively characterizes the operator $U$ as a bounded transformation between the scales of spaces $G\psi(a_2,b_2)$ and $G\varphi(a_3,b_3)$, thereby extending its domain and range to a more general functional setting.

\subsection*{Optimization of the assumed bounds}

Given the $L^p$ estimate assumed in \eqref{Lp-bound}, it is fundamental to determine the quantitative behavior of the constant with respect to the parameters. To this end, we consider
\begin{equation}\label{ZetaDef}
\zeta(p) = C(p-a_1)^{-\alpha}(b_1-p)^{-\beta}, \quad p \in (a_1, b_1),
\end{equation}
which has been introduced in Definition \ref{def Riesz}.

The minimization of $\zeta(p)$ is a necessary step to ensure the sharpness of the embedding, following the classical variational approach for operator constants discussed by Kantorovich and Akilov \cite{Kantorovich Akilov}.

By analyzing the logarithmic derivative of \eqref{ZetaDef}, we identify the stationary point $p^* = \frac{\alpha b_1 + \beta a_1}{\alpha + \beta}$. Consequently, the infimum is given by:
\begin{equation}
\inf_{p \in (a_1, b_1)} \zeta(p) = \zeta(p^*) = \frac{C}{(b_1 - a_1)^{\alpha+\beta}} \cdot \frac{(\alpha+\beta)^{\alpha+\beta}}{\alpha^\alpha \beta^\beta}.
\end{equation}
This optimal value characterizes the growth of the operator norm as $p$ approaches the boundaries of the interval $(a_1, b_1)$, providing the sharpest possible weight for the Grand Lebesgue norm construction that follows.

\vspace{3mm}

\section{Tail estimates for operators with \texorpdfstring{$L^p$}{Lp} bounds}

Let $f:\mathbb{R}^d\to\mathbb{R}$ be measurable and assume that
\[
\|f\|_p<\infty,
\qquad p\in(a,b)
\]
for some interval $(a,b)\subset[1,\infty)$. Define the natural function
\[
w(p):=\|f\|_p,
\qquad p\in(a,b).
\]
Then by definition
\[
\|f\|_{G w(a,b)}=1 .
\]

%\medskip
%
%\noindent
%Let $U$ be an operator satisfying the estimate
%\begin{equation}\label{Lp-bound}
%\|U[h]\|_p \le \zeta(p)\|h\|_p,
%\qquad p\in(a_1,b_1),
%\end{equation}
%for some function $\zeta\in\Psi_{(a_1,b_1)}$.

The theorem below can be viewed as a direct consequence of the transfer
estimate \eqref{General} with the generating function $\psi=w$.

We recall that, for $1 \le a < b < \infty$,  $L(a,b)$ denote the intersection
$$L(a,b)=\bigcap_{p\in(a,b)} L^p(\mathbb{R}^d).$$

%========================================================================

\begin{theorem}
\label{thm:main}
Let $f\in L(a,b)$, $1\leq a<b<\infty$. Let $1\leq a_1<b_1<\infty$ and
$\zeta\in\Psi_{(a_1,b_1)}$. Assume
\[
I = (a,b)\cap (a_1,b_1)\neq \emptyset.
\]
Let $U$ be a generalized Riesz-type operator satisfying
\[
\|U[f]\|_p \le \zeta(p)\|f\|_p,
\qquad p\in I.
\]
Define
\[
w(p):=\|f\|_p,\qquad p\in(a,b),
\]
\[
\varphi_f(p):=w(p)\zeta(p),\qquad p\in I,
\]
and
\[
\nu_f(p):=p\ln\varphi_f(p),\qquad p\in I.
\]
Then
\[
\|U[f]\|_{G\varphi_f(I)}\le1.
\]
Consequently, for every $t>0$,
\begin{equation}\label{main-tail}
T[U(f)](t)
\le
\exp\{-\nu_f^*(\ln t)\},
\end{equation}
where
\[
\nu_f^*(y)
:=
\sup_{p\in I}\bigl(py-\nu_f(p)\bigr),
\qquad y\in\mathbb{R}.
\]
Equivalently, in the notation of \eqref{tail-from-norm},
\[
T[U(f)](t)\le R[\varphi_f](1;t).
\]
\end{theorem}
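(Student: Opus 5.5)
The plan is to derive the statement from the transfer estimate \eqref{General}, applied with $h=f$ and $\psi=w$, and then from the norm-to-tail bound \eqref{tail-from-norm}.

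\emph{Step 1: the Grand Lebesgue bound.} First we check that $w\in\Psi_{(a,b)}$, at least after restricting to $I$. Since $f\in L(a,b)$, $w(p)=\|f\|_p$ is finite on $(a,b)$. If $f\neq 0$ it is also strictly positive. Measurability of $w$ follows from the continuity of $p\mapsto\|f\|_p$ on $(a,b)$, which comes from log-convexity of $p\mapsto\|f\|_p^p$ (Hölder).

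The condition $\inf w>0$ could fail. To avoid this, one notes that the Grand Lebesgue norm is only used through pointwise $L^p$ bounds, so the infimum condition plays no role in the argument. We also have $\|f\|_{Gw}=1$ trivially. Then for each $p\in I$,
\[
\|U[f]\|_p\le\zeta(p)\|f\|_p=\zeta(p)w(p)=\varphi_f(p).
\]
Dividing by $\varphi_f(p)$ and taking the supremum over $I$ gives $\|U[f]\|_{G\varphi_f(I)}\le 1$. This is exactly \eqref{General} with $\|h\|_{G\psi}=1$. The case $f=0$ is degenerate: then $\varphi_f\equiv 0$, and we should exclude it, or note that $T[U(f)]$ is then controlled directly only if $U[0]=0$. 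I would simply assume $f\neq0$.

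\emph{Step 2: the tail bound.} Fix $t>0$ and $p\in I$. Chebyshev--Markov, as in \eqref{tail-from-norm-new}, gives
\[
T[U(f)](t)\le t^{-p}\|U[f]\|_p^p\le t^{-p}\varphi_f(p)^p=\exp\{-(p\ln t-\nu_f(p))\}.
\]
Taking the infimum over $p\in I$ of the right-hand side turns the exponent into $-\sup_{p\in I}(p\ln t-\nu_f(p))=-\nu_f^*(\ln t)$, which is \eqref{main-tail}.

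Because we used the direct $L^p$ bound by $\varphi_f(p)$, no normalization constant is needed. This is the same as \eqref{tail-from-norm} with $\|U[f]\|_{G\varphi_f}$ replaced by its upper bound $1$. Monotonicity is the point here: for $u\le1$ we have $\ln(t/u)\ge\ln t$, and $\nu^*$ is nondecreasing because $p>0$. Hence $R[\varphi_f](\|U[f]\|_{G\varphi_f};t)\le R[\varphi_f](1;t)$, which gives the equivalent formulation.

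\emph{Main obstacle.} The analysis is trivial. The delicate part is bookkeeping: the hypotheses on generating functions ($\inf>0$, measurability) for $\varphi_f$, and the degenerate cases. If $\|U[f]\|_{G\varphi_f}=0$, the formula \eqref{tail-from-norm} is not literally applicable, which is why I bypass it and argue directly via Markov for each $p$.
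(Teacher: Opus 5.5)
Your proof is correct and follows the paper's argument: the same pointwise bound $\|U[f]\|_p\le\zeta(p)w(p)=\varphi_f(p)$ gives $\|U[f]\|_{G\varphi_f(I)}\le 1$, and the tail bound is Chebyshev--Markov optimized over $p\in I$. The only difference is that you apply Markov directly with $\varphi_f(p)$ instead of invoking \eqref{tail-from-norm} with $\|U[f]\|_{G\varphi_f(I)}$ and then using that $\nu_f^*$ is nondecreasing; this is slightly cleaner and also covers the degenerate case $U[f]=0$ a.e., where \eqref{tail-from-norm} does not literally apply.
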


\begin{proof}
Let $p\in I$. Since $p\in(a_1,b_1)$, the estimate \eqref{Lp-bound}
applies to $f$, giving
\[
\|U[f]\|_p\le \zeta(p)\|f\|_p
= \zeta(p)w(p)
= \varphi_f(p).
\]
Hence
\[
\frac{\|U[f]\|_p}{\varphi_f(p)}\le1 .
\]
Taking the supremum over $p\in I$ yields
\[
\|U[f]\|_{G\varphi_f(I)}\le1 .
\]

We can now apply the tail estimate \eqref{tail-from-norm} with
$\psi=\varphi_f$ and $U[f]$ in place of $f$. Hence, for every $t>0$,
\[
T[U(f)](t)
\le
\exp\left\{-\nu_f^*\!\left(
\ln\frac{t}{\|U[f]\|_{G\varphi_f(I)}}
\right)\right\}.
\]
Since $\|U[f]\|_{G\varphi_f(I)}\le 1$, we have
\[
\ln\frac{t}{\|U[f]\|_{G\varphi_f(I)}}\ge \ln t.
\]
Moreover, the Young--Fenchel transform $\nu_f^*$ is nondecreasing on
$\mathbb{R}$, therefore
\[
\nu_f^*\!\left(
\ln\frac{t}{\|U[f]\|_{G\varphi_f(I)}}
\right)\ge \nu_f^*(\ln t).
\]
Consequently,
\[
T[U(f)](t)\le \exp\{-\nu_f^*(\ln t)\},
\qquad t>0.
\]
Equivalently, in the notation introduced in section \ref{Intro},
\[
R[\varphi_f](u;t):=\exp\left\{-\nu_f^*\!\left(\ln\frac{t}{u}\right)\right\},
\qquad u>0,\ t>0,
\]
we have
\[
T[U(f)](t)\le R[\varphi_f](1;t).
\]
\end{proof}

%==========================================================================

\begin{remark}
In particular, theorem \ref{thm:main} applies to generalized Riesz-type operators
in the sense of Definition \ref{def Riesz}, with
\[
\zeta(p)=C(p-a_1)^{-\alpha}(b_1-p)^{-\beta}.
\]
\end{remark}

\vspace{3mm}

%===============================================================

\noindent
{\sc Example (application to the model tails).}
We recall that the notation $A(t)\asymp B(t)$ means that $A(t)$ and $B(t)$ are comparable up to multiplicative constants.

Let $f$ be a measurable function such that
\[
T[f](t)\asymp T[f_{a,\gamma}](t),
\qquad t>0,
\]
where $f_{a,\gamma}$ is the model function introduced
in Example~\ref{ex:f}.
Then, by the moment--tail relation \eqref{Stein}, we have
\[
\|f\|_p^p = p\int_0^\infty t^{p-1}T[f](t)\,dt
\asymp
p\int_0^\infty t^{p-1}T[f_{a,\gamma}](t)\,dt
= \|f_{a,\gamma}\|_p^p .
\]
Consequently,
\[
\|f\|_p \asymp \|f_{a,\gamma}\|_p,
\qquad p>ad.
\]

An analogous argument applies when
\[
T[f](t)\asymp T[g_{b,\nu}](t),
\]
where $g_{b,\nu}$ is the model function introduced in Example~\ref{ex:g},
yielding
\[
\|f\|_p \asymp \|g_{b,\nu}\|_p,
\qquad p<bd.
\]
Hence, since
\[
\varphi_f(p)=\zeta(p)w(p)=\zeta(p)\|f\|_p,
\]
we have
\[
\varphi_f(p)\asymp \zeta(p)\|f_{a,\gamma}\|_p
\quad \text{or} \quad
\varphi_f(p)\asymp \zeta(p)\|g_{b,\nu}\|_p .
\]
Define
\[
\nu_f(p):=p\ln\varphi_f(p).
\]
Substituting this into the estimate of
Theorem~\ref{thm:main}, we obtain an explicit exponential bound
for the tail of $U[f]$:
\[
T[U(f)](t)\le \exp\{-\nu_f^*(\ln t)\}.
\]
Thus, the main theorem provides explicit exponential upper bounds
for the tails of $U[f]$ in terms of the model behaviours
derived in the previous examples.
%===========================================================

%===========================================================

\section{Application: the Riesz transform}

For $j=1,\dots,d$, let $R_j$ denote the $j$-th Riesz transform, defined by
\[
R_j f(x)
=
c_d\,\mathrm{p.v.}\int_{\mathbb{R}^d}
\frac{x_j-y_j}{|x-y|^{d+1}}\,f(y)\,dy,
\qquad x\in\mathbb{R}^d.
\]

It is well known that the Riesz transforms are bounded operators on
$L^p(\mathbb{R}^d)$ for $1<p<\infty$ (see, e.g., \cite[Theorem 12.1.1]{IwaniecMartinBook}).
For recent results on the $L^p$-boundedness of related operators on lattices, see \cite{Huang-Yao}.
In particular, their operator norm satisfies an estimate of the form
\begin{equation}\label{riesz-lp}
\|R_j f\|_p \le \zeta(p)\,\|f\|_p,
\qquad 1<p<\infty,
\end{equation}
where one may take
\[
\zeta(p)=C_d\,\max\!\left\{p,\frac{p}{p-1}\right\}.
\]
Hence the Riesz transforms fit the \texorpdfstring{$L^p$}{Lp}-bound framework
of Theorem~\ref{thm:main} with $\zeta(p)$ as defined above.

\noindent
The endpoint growth of $\zeta(p)$ is analogous to that appearing in
Definition~\ref{def Riesz}. More precisely,
\[
\zeta(p)\asymp \frac{1}{p-1}\qquad \text{as } p\downarrow1,
\qquad
\zeta(p)\asymp p\qquad \text{as } p\to\infty.
\]
Assume in addition that
\begin{equation}\label{hyp:ad}
ad>1.
\end{equation}
Suppose now that
\[
T[f](t)\asymp T[f_{a,\gamma}](t),
\qquad t\downarrow 0.
\]
Then, by the moment--tail relation \eqref{Stein} and Example~\ref{ex:f},
\[
w(p)=\|f\|_p
\asymp
\|f_{a,\gamma}\|_p
\asymp
\left[
S_{d-1}\Gamma(\gamma p+1)(p/a-d)^{-\gamma p-1}
\right]^{1/p},
\qquad p\downarrow ad.
\]

The moment estimate for the model function $f_{a,\gamma}$ is valid for
$p>ad$, while the bound \eqref{riesz-lp} for the Riesz transform holds
for $p>1$. Under the assumption $ad>1$, both estimates are simultaneously valid for
\[
p>ad.
\]
On this domain, the function 
\[
\varphi_f(p)=\zeta(p)w(p)
\]
is well-defined and, since $\zeta(p)$ remains bounded as $p\downarrow ad$, its asymptotic behaviour is driven exclusively by $w(p)$:
\[
\varphi_f(p)
\asymp
\max\!\left\{p,\frac{p}{p-1}\right\}
\left[
\frac{S_{d-1}\Gamma(\gamma p+1)}
{(p/a-d)^{\gamma p+1}}
\right]^{1/p}.
\]
Define
\[
\nu_f(p):=p\ln\varphi_f(p).
\]
Applying Theorem~\ref{thm:main}, we obtain
\[
T[R_j f](t)\le \exp\{-\nu_f^*(\ln t)\}.
\]
This shows explicitly how the $L^p$ growth of the Riesz transform combines
with the intrinsic singular behaviour of the model tail of $f$ to determine
the final tail estimate for $R_j f$.

%====================================================

\vspace{3mm}

\section{Conclusion}

%=====================================================

We have shown that the natural function $w(p)=\|f\|_p$ provides an effective
description of the tail behaviour of $f$. Once the action of an operator $U$
on $L^p$ norms is controlled through a function $\zeta(p)$, one can derive
tail estimates for $U[f]$ by studying the Young--Fenchel transform of
\[
p\mapsto p\ln\bigl(\zeta(p)w(p)\bigr).
\]
This method is particularly useful for operators that are not bounded on
$L^1$ or $L^\infty$, as frequently occurs in harmonic analysis.

%=====================================================

\vspace{3mm}

\section*{Acknowledgements}
M. R. Formica is a member of Gruppo Nazionale per l'Analisi Matematica, la Probabilit\`{a} e le loro Applicazioni (GNAMPA) of the Istituto Nazionale di Alta Matematica (INdAM) and a member of the UMI group ``Teoria dell'Approssimazione e Applicazioni (T.A.A.)''.


\begin{thebibliography}{99}

\bibitem{Bogachev}
{\bf V. I. Bogachev}, \emph{Measure Theory}, Vol. 1 and 2. Springer-Verlag, Berlin, Heidelberg, 2007.
ISBN 978-3-540-34513-8.

%\bibitem{Buldygin}
%{\bf V. V. Buldygin, D. I. Mushtary, E. I. Ostrovsky and M. I. Pushalsky.}
%\emph{New Trends in Probability Theory and Statistics}. Mokslas (1992), V.1, 78--92.

\bibitem{FKOS}
{\bf M. R. Formica, Y. V. Kozachenko, E. Ostrovsky and L. Sirota}, Exponential tail estimates in the law of ordinary logarithm (LOL) for triangular arrays of random variables. \emph{Lith. Math. J.}, \textbf{60} no.3, (2020), 330--358.

\bibitem{FOS2024}
{\bf M.R. Formica, E. Ostrovsky and L. Sirota},
Connection between weighted tail, Orlicz, Grand Lorentz and Grand Lebesgue norms.
\emph{Results Math.} {\bf 79}, 103 (2024).
\url{https://doi.org/10.1007/s00025-024-02136-0}


\bibitem{Formica 14}
{\bf M.R. Formica, E. Ostrovsky and L. Sirota.}
Moments and tail reciprocal connections for the random variables
having generalized Gamma--Weibull distributions.
\emph{arXiv:2206.00624 [math.PR]}.


\bibitem{Gikhman Skorokhod}
{\bf I. I. Gikhman and A. V. Skorokhod},
\emph{The Theory of Stochastic Processes I}.
Springer-Verlag, Berlin, Heidelberg, 2004. [Reprint of the 1974 English Edition;
originally published in Russian, Nauka, Moscow, 1971].

\bibitem{Grigorjeva}
{\bf M.L. Grigorjeva and E.I. Ostrovsky},
Calculation of Integrals on discontinuous functions by means of depending trials method.
\emph{Comput. Math. Math. Phys.} {\bf 36} (1996), no. 12, 1639--1648.

\bibitem{Huang-Yao}
{\bf S. Huang and X. Yao},
The $\ell^p$-boundedness of wave operators for the fourth order Schr\"odinger operators on the lattice $\mathbb{Z}$.
\emph{arXiv:2512.10649 [math.AP]} (11 Dec. 2025).

\bibitem{Iwaniec}
{\bf T. Iwaniec and G. Martin},
Riesz transforms and related Singular Integrals.
\emph{J. reine angew. Math.}, \textbf{473} (1996), 25--57.

\bibitem{IwaniecMartinBook}
{\bf T. Iwaniec and G. Martin},
\emph{Geometric Function Theory and Non-linear Analysis}.
Oxford Mathematical Monographs, Oxford University Press, 2001.

\bibitem{Kantorovich Akilov}
{\bf L. V. Kantorovich and G. P. Akilov},
\emph{Functional Analysis}.
3rd ed., Nauka, Moscow, 1984 (in Russian).
[English translation of the 2nd ed.: Pergamon Press, Oxford, 1982].


\bibitem{Kozachenko 1}
{\bf Yu.V. Kozachenko and E.I. Ostrovsky},
Banach spaces of random variables of sub-Gaussian type.
\emph{Theory Probab. Math. Statist.} {\bf 32} (1985), 42--53.

\bibitem{Kozachenko 11}
{\bf Yu.V. Kozachenko, E.I. Ostrovsky and L. Sirota},
Relations between exponential tails, moments and moment generating functions for random variables and vectors. \\
\emph{arXiv:1701.01901}.

\bibitem{Kozachenko 12}
{\bf Yu.V. Kozachenko, E.I. Ostrovsky and L. Sirota},
Equivalence between tails, Grand Lebesgue Spaces and Orlicz norms for random variables without Kramer's condition. \emph{Bulletin of the Taras Shevchenko National University of Kyiv} (2018), 4, 20--29. {\bf 4} (2018), 20--29.


\bibitem{Liflyand}
{\bf E. Liflyand, E. Ostrovsky and L. Sirota},
Structural properties of bilateral Grand Lebesgue Spaces.
\emph{Turkish J. Math.} {\bf 34} (2010), no. 2, 207--219.


\bibitem{Ostrovsky mono}
{\bf E.I. Ostrovsky},
\emph{Exponential Estimates for Random Fields} (in Russian).
Moscow--Obninsk, OINPE, 1999.

\bibitem{Ostr HAIT}
{\bf E. Ostrovsky and L. Sirota},
Moment Banach spaces: Theory and applications.
\emph{HAIT J. Sci. Eng. C} {\bf 4} (2007), no. 1-2, 233--262.
%HAIT Journal of Science and Engineering C


\bibitem{Pisier}
{\bf G. Pisier},
Riesz transforms: a simpler analytic proof of P.A. Meyer's inequality.
\emph{Lecture Notes in Math.} {\bf 1321} (1988), 485--501.

\bibitem{Stein}
{\bf E. M. Stein},
\emph{Topics in Harmonic Analysis Related to the Littlewood--Paley Theory}.
Ann. of Math. Stud., 63, Princeton Univ. Press, Princeton, NJ, 1970.


\bibitem{Unser}
{\bf M. Unser and D. Van De Ville},
Higher-order Riesz transforms and steerable wavelet frames.
In: \emph{Proc. IEEE Int. Conf. Image Process. (ICIP)},
2009, pp.~3801--3804.


\bibitem{Yudovich1}
{\bf V. I. Yudovich},
Nonstationary flow of an ideal incompressible liquid.
\emph{Comput. Math. Math. Phys.} \textbf{3} (1963), 1407--1456.
%Computational Mathematics and Mathematical Physics


\bibitem{Yudovich2}
{\bf V. I. Yudovich},
Uniqueness theorem for the basic nonstationary problem in the dynamics of an ideal incompressible fluid.
\emph{Math. Res. Lett.} \textbf{2} (1995), 27--38.
% Mathematical Research Letters

\end{thebibliography}
\end{document}